\theoremstyle{plain}  
\newtheorem{theorem}{Theorem}[section]
\newtheorem{defn}[theorem]{Definition}
\newtheorem{lemma}[theorem]{Lemma}
\newtheorem{corollary}[theorem]{Corollary}
\newtheorem{proposition}[theorem]{Proposition}
\theoremstyle{definition}
\theoremstyle{remark} 
\newtheorem{remark}{Remark}[section]
\def\0{{\bf 0}}
\def\1{{\bf 1}}
\def \bmat{\left[\begin{matrix}}
	\def \emat{\end{matrix}\right]}
\def \xy1vec{\left[\begin{matrix}x\\y\\1\end{matrix}\right]}
\def \QED{\begin{flushright}\Halmos\end{flushright}\end{proof}}
\def\gh#1{{\color{black}#1}} 
\long\def\old#1{}
\definecolor{DarkerGreen}{RGB}{0,170,0}
\definecolor{orange}{rgb}{1,0.5,0}
\title{\LARGE \bf On the Complexity of Detecting Convexity over a Box}
\author{Amir Ali Ahmadi and Georgina Hall \thanks{The authors are with  the department of Operations Research and Financial Engineering at Princeton University. Email: \{\texttt{a\_a\_a}, \texttt{gh4}\}\texttt{@princeton.edu}. 
 This work was partially supported by the DARPA Young Faculty Award, the CAREER Award of the NSF, the Google Faculty Award, the Innovation Award of the School of Engineering and Applied Sciences at Princeton University, and the Sloan Fellowship.}}
\begin{document}
\date{}
\maketitle


\begin{abstract}
%
It has recently been shown that the problem of testing global convexity of polynomials of degree four is {strongly} NP-hard, answering an open question of N.Z. Shor. This result is minimal in the degree of the polynomial when global convexity is of concern. In a number of applications however, one is interested in testing convexity only over a compact region, most commonly a box (i.e.,  \gh{a} hyper-rectangle). In this paper, we show that this problem is also strongly NP-hard, in fact for polynomials of degree as low as three. This result is minimal in the degree of the polynomial and in some sense justifies why convexity detection in nonlinear optimization solvers is limited to quadratic functions or functions with special structure. As a byproduct, our proof shows that the problem of testing whether all matrices in an interval family are positive semidefinite is strongly NP-hard. This problem, which was previously shown to be (weakly) NP-hard by Nemirovski, is of independent interest in the theory of robust control.







%
\end{abstract}

\paragraph{Keywords:} {\small Convexity detection, convex optimization, computational complexity, interval positive semidefiniteness.}


\section{Introduction}
In a relatively recent paper~\cite{NPhard_Convexity_MathProg}, Ahmadi, Olshevsky, Parrilo, and Tsitsiklis have shown that the problem of testing whether a quartic (multivariate) polynomial is globally convex is NP-hard. This result answers a question of N.Z. Shor that had appeared in 1996 on a list of seven open problems in computational complexity for numerical optimization \cite{pardalos1992open}. The significance of the question stems from the fact that in the theory of optimization, the divide between convex and nonconvex optimization has proved to be a fundamental one. Indeed, for many classes of optimization problems, stronger algorithmic, analytic, and geometric statements can be made when the functions involved in the description of the problem are convex compared to when they are not. Hence, it is natural to ask (as Shor did) whether one can test for convexity of functions in an efficient manner. Polynomial functions provide a convenient setting for a rigorous study of this question from a computational complexity viewpoint.





For a number of problems of applied nature, one is interested not in checking for convexity of a function globally, but only over a compact region. The most common region that arises in practical applications is a \emph{box} (i.e., a hyper-rectangle). For example, the variables in many real-world optimization problems are lower and upper bounded a priori, allowing the algorithm designer to restrict attention to a box. Similarly, in branch-and-bound approaches to nonlinear and nonconvex optimization, one often recursively breaks the original problem into a number of subproblems over smaller boxes and aims to provide upper and lower bounds on the minimum of the objective function over these smaller boxes. In such a scenario, the subproblems for which the objective function is convex over the associated box can often be solved to global optimality.

The problem of detecting convexity over a box, or the related problem of \emph{imposing} convexity over a box, appears in areas outside of optimization as well. In control theory for example, Lyapunov functions that certify properties of dynamical systems are sometimes required to be convex over the region of the space where the state variables lie~\cite{chesi2008establishing, ahmadi2018sos}. Hence, the need for algorithms that check convexity of a candidate Lyapunov function over a compact region---or those that find a Lyapunov function that meets the convexity requirement by construction---naturally arises. Similarly, in statistics, the well-known problem of \emph{convex regression}~\cite{xu2016faithful,hannah2013multivariate,lim2012consistency} is about finding a convex function (within a certain function class) that best agrees with observed data. In this situation again, imposing convexity of the function globally is often too restrictive. Indeed, one generally has access to a feature domain where the explanatory variables of the problem are known to lie. This domain, more often that not, is a box.



Motivated by these considerations, our goal in this paper is to study the complexity of testing convexity of a given function over a box (which can only be simpler than the problem of imposing convexity over a box). As is done in~\cite{NPhard_Convexity_MathProg}, we restrict our attention to polynomial functions since they appear routinely in applications and are amenable to complexity theoretic investigations due to their finite parametrization.  Our main result (Theorem \ref{th:np.hardness.convex}) shows that the problem of testing convexity of a polynomial over a box is strongly NP-hard already for cubic polynomials. This result completely classifies the complexity of testing convexity of polynomials of any degree over a box (cf. Proposition \ref{prop:degree}). It also justifies, at least from a complexity viewpoint, why convexity detection in some of the most widely-used optimization packages such as BARON~\cite{baron,khajavirad2017hybrid}, CVX~\cite{cvx}, Gurobi~\cite{gurobi}, \gh{and DR.AMPL/COCONUT~\cite{trees.conv.detection}} is restricted to quadratic functions, or is replaced with the task of checking \gh{some sufficient or necessary conditions for convexity that are more tractable. These sufficient conditions typically require} that the function in question be the output of certain convexity-preserving operations applied to an initial set of convex functions; see, e.g.,~\cite[Section 3.2]{BoydBook} \gh{or~\cite[Section 3]{trees.conv.detection}}. \gh{The necessary conditions, which can be used to disprove convexity, often involve a search for a direction of negative curvature at some sample points in the region of interest; see, e.g.,~\cite[Section 5]{trees.conv.detection} or \cite{mprobe}.} \gh{There are also algebraic sufficient conditions for convexity over a box that can be efficiently checked by sum of squares optimization; see, e.g., \cite[Section 5]{magnani2005tractable}, \cite[Chapter 7]{phdthesisGH}. These tests are mainly applicable to polynomial functions and to our knowledge are not currently implemented in popular software.}

\gh{Assuming P$\neq$ NP, the implication of our Theorem~\ref{th:np.hardness.convex} below for convexity detection is as follows. Suppose an optimization solver is endowed with a pseudo-polynomial time algorithm for proving convexity of a function over a box and a pseudo-polynomial time algorithm for disproving it. Then, regardless of how these algorithms are designed, there are cubic polynomials for which convexity can neither be proved nor disproved by this solver. In fact, as the number of variables grows, this solver will fail to prove or disprove convexity for a ``significant portion'' of cubic polynomials; see Corollary~2.2 of \cite{hemaspaandra2012sigact} for a precise complexity-theoretic statement.}




The proof of \gh{Theorem~\ref{th:np.hardness.convex}} has two parts, the first of which relies heavily on a clever gadget by Nemirovski~\cite{nemirovskii1993several}. Some of our modifications to his arguments are of potential interest to the field of robust control (cf. Corollary~\ref{cor:psd.interval}).

\section{The Main Result}

We begin with some basic definitions.

\begin{defn}
Given a set of scalars $l_1,\ldots,l_n$,$u_1,\ldots,u_n,$ with $l_i\leq u_i$ for $i=1,\ldots,n$, a \emph{box} $B\subseteq \mathbb{R}^n$ is a set of the form $$B=\{x \in \mathbb{R}^n|~ l_i\leq x_i \leq u_i, i=1,\ldots,n\}.$$
\end{defn}

\begin{defn}
A function $f:\mathbb{R}^n \rightarrow \mathbb{R}$ is said to be \emph{convex over a \gh{convex} set} $C\subseteq \mathbb{R}^n$ if for every $x, y \in C$ and for any $\lambda \in [0,1]$, we have $f(\lambda x+(1-\lambda)y)\leq \lambda f(x)+(1-\lambda)f(y)$.
\end{defn}

We study the complexity of detecting convexity in the standard Turing model of computation (see, e.g., \cite{sipser2006introduction}) where the input to every problem instance must be defined by a finite number of bits. As a consequence, in the statement of Theorem~\ref{th:np.hardness.convex} below, the input to the problem---which consists of the coefficients of the polynomial $f$ and the scalars $l_1,\ldots,l_n,u_1,\ldots,u_n$ that define the box $B$---is taken to be rational.  

We further remark that the problem in the statement of Theorem \ref{th:np.hardness.convex} is \emph{strongly} NP-hard. This is as opposed to problems that are \emph{weakly} NP-hard such as the classical problems of KNAPSACK or PARTITION~\cite{GareyJohnson_Book}. This distinction only occurs in problems whose instances involve numerical data\footnote{Note that the instances of not all of NP-hard problems involve numerical data; consider, e.g., SATISFIABILITY~\cite{GareyJohnson_Book}. For such problems, NP-hardness is by default in the strong sense.} and relates to how the numerical values that appear in an instance compare against the length of the instance. Roughly speaking, being NP-hard in the strong sense means that the problem remains NP-hard even when the numerical values in all problem instances are ``small''. More rigorously, \gh{following the conventions used in \cite{GareyJohnson_Book}}, define a Max (resp. Length) function which maps any instance $I$ of a decision problem to a nonnegative integer Max($I$) (resp. Length($I$)) that represents the largest numerator or denominator in magnitude of all rational numbers appearing in $I$ (resp. the number of bits required to write down $I$). A decision problem $\Pi$ is said to be \emph{strongly NP-hard} if there exists a subproblem $\hat{\Pi}$ which is NP-hard and for which one can find a polynomial $p$ such that Max($I$) is upperbounded by $p$(Length($I$)) for all instances $I \in \hat{\Pi}$. 
 
The advantage of showing that a problem is strongly NP-hard (as opposed to weakly NP-hard) is that such a statement rules out, unless P=NP, the possibility of a \emph{pseudo-polynomial time} algorithm. This is an algorithm whose running time is polynomial in the numerical value of the input but not necessarily in the bit length of the input; see~\cite[Section 4.2.2]{GareyJohnson_Book} for more details. More concretely, if we had only shown weak NP-hardness of testing convexity of an $n$-variate cubic polynomial over a box, it could be that the problem is polynomial-time solvable (even assuming P$\neq$ NP) except for instances where some of the coefficients of the polynomial or the bounds of the box are of order $2^n$. Indeed, for the KNAPSACK and PARTITION problems for example, it is well known that one can solve, in polynomial time, all instances which do not contain such large numbers using dynamic programming \cite[p. 96]{GareyJohnson_Book}.

%
%
%


\begin{theorem}[Main result] \label{th:np.hardness.convex}
	Given a cubic polynomial $f$ and a box $B$, it is strongly NP-hard to test whether $f$ is convex over $B$.
\end{theorem}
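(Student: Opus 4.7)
The plan is to prove Theorem~\ref{th:np.hardness.convex} in two stages, following the strategy foreshadowed in the introduction and centered on Nemirovski's gadget.

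\emph{Stage 1: Strong NP-hardness of interval positive semidefiniteness.} I first aim to upgrade Nemirovski's weak NP-hardness result for interval PSD to a strong NP-hardness statement---this is the corollary for robust control mentioned in the introduction. The version I target comes with a polynomial robustness gap: in YES-instances every matrix $M$ in the interval family will satisfy $M \succeq I$, while NO-instances will contain at least one $M$ with $\lambda_{\min}(M) < 0$. Nemirovski's original reduction is from PARTITION and therefore yields only weak NP-hardness. I plan to replace the source problem with a strongly NP-hard one (for instance a suitable restriction of 3-SAT, or copositivity of a $\{-1,0,1\}$-valued matrix, which is strongly NP-hard), while retaining the structure of Nemirovski's gadget and carefully tracking coefficient magnitudes so that the rational entries of $L$ and $U$ remain polynomially bounded in the input size.

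\emph{Stage 2: From interval PSD to convexity of a cubic over a box.} The key observation is that the Hessian of any cubic polynomial $f$ is an affine function of its arguments, so convexity of $f$ on a box $B$ is equivalent to positive semidefiniteness of every matrix in the affine slice $\{\nabla^2 f(z) : z \in B\}$. Given an $n\times n$ interval PSD instance $(L,U)$ from Stage~1, I introduce one variable $y_{ij}$ for each independent upper-triangular entry of the symmetric matrix $M$ together with auxiliary variables $x_1,\dots,x_n$, and define the cubic polynomial
\[
    f(x,y) \;=\; \tfrac{1}{2}\, x^{\top} M(y)\, x \;+\; \tfrac{\lambda}{2}\,\|y\|^2 ,
\]
where $M(y)$ is the symmetric matrix with entries $y_{ij}$ and $\lambda$ is a rational parameter. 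A direct computation shows
\[
    \nabla^2 f(x,y) \;=\; \begin{pmatrix} M(y) & G(x) \\ G(x)^{\top} & \lambda I \end{pmatrix},
\]
where $G(x)$ is linear in $x$ alone and its operator norm grows only polynomially in $n$ over any bounded box. A Schur complement argument then turns the positive semidefiniteness of $\nabla^2 f(x,y)$ into the condition $M(y) \succeq \tfrac{1}{\lambda}\, G(x) G(x)^{\top}$. Choosing $B = [-1,1]^n \times \prod_{i\le j}[L_{ij}, U_{ij}]$ together with $\lambda$ of polynomial bit-length but large enough to make $\tfrac{1}{\lambda}\, G(x)G(x)^{\top} \preceq \tfrac{1}{2}\,I$ everywhere on $B$, the YES-case $M \succeq I$ immediately implies the Schur condition, while in the NO-case the point $x=0$ together with a $y$ encoding a non-PSD $M_0$ already violates it. Hence $f$ is convex over $B$ iff every $M \in [L,U]$ is positive semidefinite, and all numerical data has polynomial bit-length, giving the desired strong NP-hardness transfer.

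\emph{Main obstacle.} The hard part is Stage~1: I need not just strong NP-hardness of interval PSD, but a version with a quantitative robustness gap between YES and NO instances that is controlled polynomially in the input. This requires care both in choosing the source problem for the reduction and in redesigning Nemirovski's gadget so that the gap is preserved while coefficients stay polynomially bounded. Once that structured reduction is in place, the Schur-complement construction of Stage~2 is essentially a bookkeeping exercise verifying that $\|G(x)G(x)^{\top}\|$ is small enough to fit under the gap.
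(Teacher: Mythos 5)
Your two-stage architecture matches the paper's: first establish strong NP-hardness of a gap version of interval/affine-matrix positive semidefiniteness, then lift it to convexity of a cubic via a Schur complement on the Hessian. Your Stage~2 is correct and, if anything, slightly more general than the paper's. The paper works with the specific matrix $L(x)$ coming out of its Theorem~\ref{th:strong.np.hard} (off-diagonal block equal to $x$ itself), forms $f(x,y)=\tfrac12 y^TL(x)y+\tfrac{\alpha}{2}x^Tx+\tfrac{\eta}{2}y^Ty$ in $2n+1$ variables, and places the gap on the NO side (the $+\eta I$ perturbation must not destroy the negative eigenvalue, which is why Theorem~\ref{th:strong.np.hard} guarantees $x^TC^{-1}x\geq\mu+\tfrac34$ rather than just $>\mu+\tfrac14$). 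You instead introduce one variable per matrix entry, use $O(n^2)$ variables, and place the gap on the YES side ($M\succeq I$). Both are valid ways to sidestep the obstruction that an arbitrary affine symmetric matrix need not be a Hessian; your version handles a general interval family, the paper's keeps the variable count linear. Your computation of $\nabla^2 f$, the bound on $\|G(x)G(x)^T\|$ over $[-1,1]^n$, and the use of $x=0$ in the NO case are all sound.

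The genuine gap is in Stage~1, and you have correctly diagnosed it as the crux without actually closing it. You need a \emph{strongly} NP-hard family of interval PSD instances with a quantitative separation (YES: $M\succeq \epsilon I$ for all $M$ in the family with $\epsilon \geq 1/\mathrm{poly}(n)$; NO: some $M\not\succeq 0$), with all data polynomially bounded. Saying ``retain the structure of Nemirovski's gadget with a strongly NP-hard source problem'' does not produce this: Nemirovski's construction involves $C^{-1}$, and inverting a matrix can blow up entry magnitudes, which is precisely what threatens strong NP-hardness; and the gap does not come for free from the source problem but from an integrality argument. The paper's resolution is the specific choice $C=\tfrac{4}{(n+1)^3}\bigl(I_n+\tfrac{1}{(n+1)^3}A\bigr)$ with $A$ the adjacency matrix of a SIMPLE MAX-CUT instance, together with Lemma~\ref{lem:bounds}, which shows $x^TC^{-1}x$ equals $\tfrac14 x^T((n+1)^3I_n-A)x$ up to an additive error of $\tfrac14$ (by truncating the Neumann series and bounding the tail); the integrality of the cut value then yields an additive gap of $1$ in the quadratic form, leaving a residual gap of $\tfrac12$ after absorbing the two $\tfrac14$ errors, while all entries of $C$ and $\mu$ stay of size $O(n^6)$. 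None of this is in your proposal, and without it (or an equivalent gadget) neither your YES-side normalization $M\succeq I$ nor the polynomial bound on the numerical data is established. Identifying the obstacle is not the same as overcoming it; as written, the proof is incomplete at exactly the step that carries the paper's main technical content.
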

\begin{remark}
	\gh{In fact, testing convexity of a cubic polynomial $f$ over a box $B$ is a co-NP-complete problem. To see membership in co-NP, first observe that since the entires of $\nabla^2 f$ are affine, positive semidefiniteness of $\nabla^2 f$ at the extreme points of $B$ implies its positive semidefiniteness over all of $B$. Hence, if $f$ is not convex over $B$, one can always present one of the extreme points of $B$ as a certificate of nonconvexity. This certificate has polynomial size since its entries consist of the rational numbers $l_i,u_i$ which are part of the input. Moreover, once such a candidate extreme point is presented, one can conclude in polynomial time that the Hessian matrix evaluated at it is not positive semidefinite; see e.g. the arguments in the first paragraph of the proof of Proposition~\ref{prop:degree}.}
	\end{remark}

The proof of \gh{Theorem~\ref{th:np.hardness.convex}} is based on a reduction from the problem of testing whether a matrix $L(x)$---whose entries are affine polynomials in $x$---is positive semidefinite for all $x$ in a full-dimensional box $B$. This problem has already been shown to be \gh{(weakly)} NP-hard by Nemirovski~\cite{nemirovskii1993several}. The connection between this problem and the problem of testing convexity of a cubic polynomial $f\mathrel{\mathop{:}}=f(x)$ over a box $B$ lies in the facts that (i) the Hessian $\nabla^2 f(x)$ of $f(x)$ is an affine polynomial matrix, and (ii) a twice continuously differentiable function $f$ is convex over a convex set $S$ with nonempty interior if and only if \gh{$\nabla^2 f(x) \succeq 0,~\forall x \in S$; see, e.g., \cite[Chapter 1]{bertsekas2009convex}.\footnote{Here, the notation $M\succeq 0$ is used to denote that a symmetric matrix $M$ is positive semidefinite, i.e., has nonnegative eigenvalues. We will also use the notation $A \succeq B$ for two symmetric matrices $A$ and $B$ to denote that $A-B \succeq 0$.}} The proof of Theorem \ref{th:np.hardness.convex} is split into two parts which we outline below.


In the first part of the proof, we concern ourselves with strengthening the NP-hardness result from~\cite{nemirovskii1993several}, which we use here, from weak to strong (Theorem~\ref{th:strong.np.hard}). Indeed, the reduction provided in \cite{nemirovskii1993several} is from the PARTITION problem which is only weakly NP-hard~\cite{GareyJohnson_Book}, and hence only proves weak NP-hardness of testing positive semidefiniteness of an affine polynomial matrix over a box. 
%
As a consequence, we give a new reduction, which still uses the insights that we gleaned from Nemirovski's proof, but with two modifications. First, we give a reduction from a strongly NP-hard problem. Second, we bypass a step in the Nemirovski's construction that involves matrix inversion. This is because it is possible for the inverse of a matrix to have entries that are exponential in the entries of the original matrix, thus causing the reduction to lose its strong NP-hardness implication. To bypass matrix inversion, we instead approximate the inverse by its first-order Taylor expansion and control the spectrum of the matrix in such a way that the higher-order terms in the Taylor expansion can be appropriately bounded (cf. Lemma~\ref{lem:bounds}).




In the second and core part of the proof, we give the reduction from the problem of testing positive semidefiniteness of an affine polynomial matrix over a box to the problem of testing convexity of a cubic polynomial over a box. 
The main obstacle that we need to overcome here is that not every affine (symmetric) polynomial matrix $L(x)$ is the Hessian of some cubic polynomial. For instance, if, for some integers $i,j,k \in \{1,\ldots,n\}$, the equality $$\frac{\partial L_{ij}(x)}{\partial x_k} =\frac{\partial L_{ik}(x)}{\partial x_j}$$ is violated, then $L(x)$ cannot be a valid Hessian matrix. This is because partial derivatives of polynomials must commute. Our goal is then to show that even with these additional constraints, the problem of checking positive semidefiniteness remains hard. To do this, we will introduce new variables $y$ and carefully construct a polynomial $f(x,y)$ whose Hessian can be related back to $L(x)$.

\gh{Before we proceed, we briefly contrast the proof of the main result of this paper with the proof of the main result (Theorem 2.1) in \cite{NPhard_Convexity_MathProg} as they have some commonalities. Both proofs involve a reduction where it is shown that an $m\times m$ symmetric polynomial matrix $P(x)$ in $n$ variables is positive semidefinite over a region $R$ if and only if the Hessian matrix of some polynomial $f$ in a higher number of variables is positive semidefinite over a related region $R'$. In~\cite{NPhard_Convexity_MathProg}, $m=n$, the entries of $P(x)$ are homogeneous quadratic polynomials, $R$ and $R'$ are the entire Euclidean space, and the number of variables in $f$ is $2n$. In our proof, $m=n+1$, the entries of $P(x)$ are affine polynomials, $R$ and $R'$ are boxes, and the number of variables in $f$ is $2n+1$. In~\cite{NPhard_Convexity_MathProg}, the proof starts from the fact that testing positive semidefiniteness of a quadratic polynomial matrix on the unit sphere is strongly NP-hard. This fact is not helpful for us since quadratic polynomial Hessian matrices are second derivatives of quartic polynomials while our goal is to end up with cubic polynomials. Hence, we have to first establish strong NP-hardness of testing positive semidefiniteness of affine polynomial matrices (over boxes) as mentioned before. Another difference in the two proofs is in the construction of the polynomial $f$ which as compared to~\cite{NPhard_Convexity_MathProg} necessitates a more careful balancing of the coefficients of the terms involved. We also note that the reduction in~\cite{NPhard_Convexity_MathProg} turns \emph{any} homogeneous quadratic polynomial matrix into a valid Hessian matrix while preserving positive semidefiniteness. By contrast, our reduction exploits the specific structure of the affine polynomial matrix that emerges from Theorem~\ref{th:strong.np.hard} below; see $L(x)$ in (\ref{def:Lx}).
}

\begin{theorem}[see the proof of Proposition 2.1 in~\cite{nemirovskii1993several} for a related result]\label{th:strong.np.hard}
	Given a symmetric $n \times n$ matrix $A$ with entries in $\{0,1\}$ and a positive integer $k \leq n^2$, let
	\begin{align}\label{def:gam.C.mu}
	C\mathrel{\mathop{:}}=\frac{4}{(n+1)^3}\left(I_n+\frac{1}{(n+1)^3}A\right), \text{ and } \mu\mathrel{\mathop{:}}=\frac{n(n+1)^3}{4}+k-1-\frac{1}{4}e^TAe,
	\end{align}
	where $e$ is the $n \times 1$ vector of all ones and $I_n$ is the $n \times n$ identity matrix.
	It is strongly NP-hard to test whether 
	\begin{align}\label{def:Lx}
	L(x)\mathrel{\mathop{:}}=\begin{bmatrix}
	C & x \\ x^T & \mu +\frac14
	\end{bmatrix} \succeq 0, \forall x \in \mathbb{R}^n \text{ with } ||x||_{\infty}\leq 1.
	\end{align}
	Furthermore, for any matrix $C$ and scalar $\mu$ thus defined, either (\ref{def:Lx}) holds or there exists \gh{an} $x \in \mathbb{R}^n$ with $||x||_{\infty}\leq 1$ such that $x^TC^{-1}x \geq \mu +\frac34$.
\end{theorem}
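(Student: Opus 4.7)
The plan is a polynomial-time reduction from the MAX-CUT decision problem---which is strongly NP-hard since its instances contain no large numerical data---to the problem of testing (\ref{def:Lx}). An instance of MAX-CUT consists of a graph $G$ on $n$ vertices with symmetric zero-diagonal adjacency matrix $A\in\{0,1\}^{n\times n}$ and an integer $k\le |E| \le n^2$; one must decide whether $G$ has a cut of size at least $k$. I would feed the same $A$ and $k$ into (\ref{def:gam.C.mu}) to produce $C$ and $\mu$. The reduction is evidently polynomial time, and every numerator and denominator of the resulting instance is polynomial in $n$, so any NP-hardness inherited from MAX-CUT is automatically strong.

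Since $\|A\|\le n<(n+1)^3$, the matrix $C$ is positive definite; by Schur complement $L(x)\succeq 0$ iff $x^T C^{-1}x\le\mu+\tfrac14$. Because $C^{-1}\succeq 0$, the maximum of $x^T C^{-1}x$ over $\|x\|_\infty\le 1$ is attained at a vertex $x\in\{-1,+1\}^n$, so (\ref{def:Lx}) is equivalent to $\max_{x\in\{-1,+1\}^n} x^T C^{-1} x \le \mu+\tfrac14$.

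The core technical step is to approximate $C^{-1}$ via its Neumann series. Writing $C=\frac{4}{(n+1)^3}\bigl(I+\tfrac{1}{(n+1)^3}A\bigr)$ and noting $\|A\|/(n+1)^3<1$, one obtains
\[
x^T C^{-1}x \;=\; \frac{(n+1)^3}{4}\sum_{j\ge 0}\frac{(-1)^j x^T A^j x}{(n+1)^{3j}} \;=\; \frac{n(n+1)^3}{4} \;-\; \frac{x^T A x}{4} \;+\; R(x),
\]
where $R(x)$ denotes the tail from $j\ge 2$. Using $|x^T A^j x|\le \|x\|^2\|A\|^j\le n^{j+1}$ and summing the geometric series gives $|R(x)| \le \frac{n^3}{4((n+1)^3-n)} < \tfrac14$ for every $n\ge 1$; this is what I expect Lemma~\ref{lem:bounds} to record. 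A direct combinatorial computation shows that for $S_x\defeq\{i:x_i=-1\}$ one has $e^T A e - x^T A x = 4\cdot\mathrm{cut}(S_x)$, and substituting the definition of $\mu$ yields
\[
x^T C^{-1}x \;-\; \bigl(\mu+\tfrac14\bigr) \;=\; \mathrm{cut}(S_x) \;-\; \bigl(k-\tfrac34\bigr) \;+\; R(x).
\]

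Since $\mathrm{cut}(S_x)\in\mathbb{Z}$ and $|R(x)|<\tfrac14$, the right-hand side is strictly negative whenever $\mathrm{cut}(S_x)\le k-1$ and at least $\tfrac12$ whenever $\mathrm{cut}(S_x)\ge k$. Consequently, (\ref{def:Lx}) holds if and only if $G$ admits no cut of size $\ge k$, establishing the reduction; moreover, in the failing case the maximizing vertex satisfies $x^T C^{-1}x \ge \mu+\tfrac34$, which gives the promise-gap clause. The main obstacle in executing this plan is the remainder bound: the specific $(n+1)^3$ scaling in (\ref{def:gam.C.mu}) is precisely what ensures $|R(x)|<\tfrac14$, which is the room required for the integrality of cut values to force the clean YES/NO dichotomy needed for both the reduction and the promise gap.
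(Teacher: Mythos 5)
Your proposal is correct and follows essentially the same route as the paper: a reduction from (unweighted) MAX-CUT, a Neumann-series expansion of $C^{-1}$ whose tail is bounded by $\tfrac14$ (the content of Lemma~\ref{lem:bounds}), the Schur complement, and the integrality of cut values to produce the $\mu+\tfrac34$ gap. The only cosmetic difference is that you invoke vertex-attainment of the maximum directly for the convex form $x\mapsto x^TC^{-1}x$, whereas the paper applies it to the quadratic approximant $\tfrac14 x^T((n+1)^3I_n-A)x$ and then transfers the bound to $x^TC^{-1}x$ over the whole box.
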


\gh{
\begin{remark}\label{rem:Schur}
As the matrix $C$ is positive definite (see the proof of Lemma \ref{lem:bounds}), by using the Schur complement \cite[Appendix C.4]{BoydBook}, the constraint in (\ref{def:Lx}) is equivalent to having $x^TC^{-1}x \leq \mu+\frac14$  for all $x$ with $||x||_{\infty}\leq 1$. 
%
\end{remark}}

To prove \gh{Theorem~\ref{th:strong.np.hard}}, we will make use of the following lemma.

\begin{lemma}\label{lem:bounds}

Let $A$ be a symmetric $n \times n$ matrix with entries in $\{0,1\}$ and let $C$ be defined as in (\ref{def:gam.C.mu}). Then, for any $x \in \mathbb{R}^n$ with $||x||_{\infty} \leq 1$, we have
\begin{align*}
\frac{1}{4}x^T\left((n+1)^3I_n-A\right)x -\frac{1}{4}\leq x^TC^{-1}x \leq \frac{1}{4}x^T\left((n+1)^3I_n-A\right)x +\frac{1}{4}.
\end{align*}
\end{lemma}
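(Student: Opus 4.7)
The plan is to expand $C^{-1}$ via one step of the Neumann identity and show that the resulting higher-order remainder lies in $[-\tfrac14,\tfrac14]$ on the box $\|x\|_\infty \le 1$. Write
\[
C = \frac{4}{(n+1)^3}\,(I_n + M), \qquad M := \frac{1}{(n+1)^3}A.
\]
Since $A$ is symmetric with entries in $\{0,1\}$, we have $\|A\|_2 \le \|A\|_\infty \le n$, so $\|M\|_2 \le n/(n+1)^3 < 1$. This simultaneously guarantees that $I_n + M$ (and hence $C$) is invertible (in fact positive definite, as needed for the Schur-complement reformulation in Remark~\ref{rem:Schur}) and permits the use of the exact identity $(I_n + M)^{-1} = I_n - M + M^2(I_n + M)^{-1}$.

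Applying this identity yields
\[
C^{-1} \;=\; \frac{(n+1)^3}{4}\,I_n \;-\; \frac{1}{4}\,A \;+\; \frac{(n+1)^3}{4}\, M^2 (I_n + M)^{-1}.
\]
Evaluating the associated quadratic form at $x$ gives
\[
x^T C^{-1} x \;=\; \frac{1}{4}\, x^T\!\bigl((n+1)^3 I_n - A\bigr)x \;+\; E(x), \qquad E(x) := \frac{(n+1)^3}{4}\, x^T M^2 (I_n+M)^{-1} x.
\]
The lemma therefore reduces to the single claim $|E(x)| \le \tfrac14$ for every $x$ with $\|x\|_\infty \le 1$.

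For the remainder estimate I would submultiplicatively bound the spectral norm of the symmetric matrix $M^2(I_n+M)^{-1}$ by $\|M\|_2^2 / (1 - \|M\|_2)$, combine it with $\|x\|_2^2 \le n$ (which follows from $\|x\|_\infty \le 1$), and compute
\[
|E(x)| \;\le\; \frac{(n+1)^3}{4} \cdot n \cdot \frac{n^2/(n+1)^6}{\,1 - n/(n+1)^3\,} \;=\; \frac{n^3}{4\bigl((n+1)^3 - n\bigr)} \;\le\; \frac14,
\]
where the last inequality uses $(n+1)^3 - n = n^3 + 3n^2 + 2n + 1 \ge n^3$. The only real obstacle is arithmetic bookkeeping: one must verify that the specific scaling $\frac{4}{(n+1)^3}$ chosen in the definition of $C$ makes the second-order term in the Neumann expansion small enough to fit inside the slack $\pm\tfrac14$. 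The computation above confirms this with room to spare, and no additional hypotheses on $A$ beyond $\{0,1\}$-entries are needed.
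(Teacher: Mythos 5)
Your proof is correct and follows essentially the same route as the paper's: both expand $C^{-1}$ via the Neumann series of $\left(I_n+\tfrac{1}{(n+1)^3}A\right)^{-1}$, peel off the first-order term $\tfrac14 x^T((n+1)^3I_n-A)x$, and bound the second-order remainder by roughly $n^3/\bigl(4(n+1)^3\bigr)\le\tfrac14$ using $\|x\|_2^2\le n$ and $\|A\|_2\le n$. The only cosmetic difference is that you package the remainder in the exact closed form $M^2(I_n+M)^{-1}$ while the paper bounds the infinite tail of the series term by term with a geometric sum; the resulting estimates are the same.
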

\begin{proof}
First, note that $C$ is by construction strictly diagonally dominant, and hence, by Gershgorin's circle theorem \cite{gersh}, positive definite. This implies that $C^{-1}$ is well defined. Furthermore, we have
\begin{align}\label{eq:C.identity}
C^{-1}=\frac{(n+1)^3}{4} \left(I_n- \frac{-A}{(n+1)^3}\right)^{-1}=\frac{(n+1)^3}{4} \sum_{k=0}^{\infty} \frac{(-A)^k}{(n+1)^{3k}},
\end{align}
where the second equality follows from the identity $(I_n-M)^{-1}=\sum_{k=0}^\infty M^k$, \gh{which holds if for some matrix norm  $||\cdot||$ we have $||M||<1$; see, e.g., \cite[Section 5.6]{HJ_Matrix_Analysis_Book}. Here, for example, it is easy to see that the maximum absolute row sum norm of $\frac{-A}{(n+1)^3}$ is strictly less than one. Indeed, each row of $A$ contains at most $n$ ones, and so the maximum absolute row sum norm of $-A/(n+1)^3$ is less than $1/(n+1)^2<1$.}
%
%
%
 Using (\ref{eq:C.identity}), we have
$$x^TC^{-1}x=\frac{1}{4} x^T((n+1)^3I_n -A)x +\frac{(n+1)^3}{4} \sum_{k=2}^\infty \frac{x^T(-A)^kx}{(n+1)^{3k}}.$$
It remains to show that $$\left| \frac{(n+1)^3}{4} \sum_{k=2}^\infty \frac{x^T(-A)^kx}{(n+1)^{3k}} \right| \leq \frac14$$ for any $x \in \mathbb{R}^n$ with $||x||_{\infty} \leq 1$ to conclude the proof. Letting $x \in \mathbb{R}^n$ with $||x||_{\infty} \leq 1$, we have
\begin{align*}
\left| \frac{(n+1)^3}{4} \sum_{k=2}^\infty \frac{x^T(-A)^kx}{(n+1)^{3k}} \right| &\leq \frac{(n+1)^3}{4} \sum_{k=2}^\infty \frac{|x^T(-A)^kx|}{(n+1)^{3k}}\\
&\leq \frac{(n+1)^3}{4} \sum_{k=2}^\infty \frac{||x||_2\cdot ||(-A)^kx||_2}{(n+1)^{3k}}\\
&\leq \frac{(n+1)^3}{4} \sum_{k=2}^\infty \frac{||x||_2 \cdot ||A||^k_2 \cdot ||x||_2}{(n+1)^{3k}}\\
&\leq \frac{(n+1)^3}{4} \sum_{k=2}^\infty \frac{n||A||^k_2}{(n+1)^{3k}}\\
&\leq \frac{n(n+1)^3}{4} \sum_{k=2}^\infty \frac{1}{(n+1)^{2k}},
\end{align*}
where the first inequality uses the triangle inequality, the second the Cauchy-Schwarz inequality, the third the definition and the submultiplicativity property of the matrix 2-norm, the fourth the facts that $||x||_2 \leq \sqrt{n}||x||_{\infty}$ and $||x||_{\infty} \leq 1$, and the fifth the fact that $||A||_2\leq n \leq n+1.$ Now, using the geometric series identity 
$$\sum_{k=2}^{\infty} \frac{1}{(n+1)^{2k}}=\frac{1}{(n+1)^4}\cdot \frac{1}{1-\frac{1}{(n+1)^2}},$$ we conclude that
\begin{align*}
\left| \frac{(n+1)^3}{4} \sum_{k=2}^\infty \frac{x^T(-A)^kx}{(n+1)^{3k}} \right| &\leq \frac{1}{4}\cdot \frac{n}{n+1} \cdot\frac{1}{1-\frac{1}{(n+1)^2}}\\
&=\frac{1}{4}\cdot \frac{n+1}{n+2} \\
&\leq \frac{1}{4}.
\end{align*}
\end{proof}

\begin{proof}[Proof of Theorem \ref{th:strong.np.hard}]
	We give a reduction from the SIMPLE MAX-CUT problem, which is the following decision problem: Given a simple\footnote{Recall that a graph is \emph{simple} if it is unweighted, undirected, and has no self-loops or multiple edges.} graph $G=(V,E)$ and a positive integer $s \leq |V|^2$, decide whether there is a cut of size greater or equal to $s$ in $G$, i.e., a partition of $V$ into two disjoint sets $V_1$ and $V_2$ such that the number of edges from $E$ that have one endpoint in $V_1$ and one endpoint in $V_2$ is greater or equal to $s$. This problem is known to \gh{strongly} NP-hard~\cite[p. 210]{GareyJohnson_Book}.

Consider an instance of SIMPLE MAX-CUT given by a graph $G=(V,E)$ and a positive integer $s \leq |V|^2$. We construct an instance of the problem given in the statement of the theorem by taking $A$ to be the adjacency matrix of $G$ (i.e., a symmetric $|V| \times |V|$ matrix whose $(i,j)$-th entry is equal to one if $\{i,j\} \in E$ and equal to zero otherwise) and $k=s$. Clearly this reduction is polynomial in length. Now, let $C$ and $\mu$ be as defined in (\ref{def:gam.C.mu}) with $n=|V|$. 
	%

	
	We show that $L(x)$ as defined in (\ref{def:Lx}) is positive semidefinite for all $x \in \mathbb{R}^n$ with $||x||_{\infty} \leq 1$ if and only if there is no cut in $G$ of size greater or equal to $s$. This would show that the problem in the statement of the theorem is NP-hard. In fact, as explained previously, since $$\mbox{Max}(I)=k \leq n^2+\lceil \log_2(k) \rceil=\mbox{Length}(I)$$ for any instance $I$ of the problem, the problem is automatically strongly NP-hard; see e.g. \cite[p. 95]{GareyJohnson_Book}. 
	
	Suppose first that there is no cut in $G$ of size greater or equal to $s$. As $k=s$, this implies that the largest cut in $G$ is of size less than or equal to $k-1$. It is straightforward to verify that the size of the largest cut in $G$ is given by $\max_{x \in \{-1,1\}^n} \frac{1}{4}\sum_{i,j}A_{ij} (1-x_ix_j)$. Hence, if 
	\begin{equation*}
	\begin{aligned}
	p^*\mathrel{\mathop{:}}=&\min_x &&\frac{1}{4}x^TAx\\
	&\text{s.t. } &&x \in \{-1,1\}^n,
	\end{aligned}
	\end{equation*}
	the size of the largest cut in $G$ is less than or equal to $k-1$ if and only if  $\frac{1}{4}e^TAe -p^* \leq k-1 \Leftrightarrow p^{*} \geq \frac{1}{4} e^TAe -k+1.$ As $x^Tx=n$ when $x \in \{-1,1\}^n$, this is equivalent (after basic algebra) to the optimal value of 
	\begin{equation*}
	\begin{aligned}
	&\max_x &&\frac{1}{4} x^T((n+1)^3 I_n-A)x\\
	&\text{s.t. } &&x \in \{-1,1\}^n
	\end{aligned}
	\end{equation*}
	being less than or equal to $\frac{n(n+1)^3}{4} +k-1-\frac{1}{4}e^TAe=\mu.$ As the quadratic function $$x \mapsto \frac14 x^T((n+1)^3I_n-A)x$$ is convex, and as the maximum of a convex function over a box is attained at an extreme point of the box (see, e.g., \cite[Property 12]{bensonconcave}), the previous statement is equivalent to the optimal value of
	\begin{equation*}
	\begin{aligned}
	&\max_x &&\frac{1}{4} x^T((n+1)^3I_n-A)x\\
	&\text{s.t. } &&x \in [-1,1]^n
	\end{aligned}
	\end{equation*}
	being less than or equal to $\mu.$ Hence we have shown that if the size of the largest cut in $G$ is less than or equal to $k-1$, then $\frac{1}{4} x^T ((n+1)^3 I_n-Ax)x \leq \mu$ for all $x \in \mathbb{R}^n$ with $||x||_{\infty} \leq 1.$ From Lemma \ref{lem:bounds}, it follows that 
	$$x^TC^{-1}x \leq \mu +\frac{1}{4}$$ for any $x \in \mathbb{R}^n$ with $||x||_{\infty}\leq 1.$ Using the Schur complement \gh{(see Remark \ref{rem:Schur})}, this is equivalent to $L(x)$ being positive semidefinite for all $x \in \mathbb{R}^n$ with $||x||_{\infty}\leq 1.$
	
	Suppose now that there is a cut in $G$ of size greater or equal to $s$. Let $\hat{x} \in \{-1,1\}^n$ be the indicator vector of this cut; i.e., $\hat{x}_i=1$ if node $i$ belongs to $V_1$ and $\hat{x}_i=-1$ if node $i$ belongs to $V_2$. It is easy to check that the number of edges with one endpoint in $V_1$ and one endpoint in $V_2$ is given by $\frac{1}{4} (e^TAe- \hat{x}^TA\hat{x}).$  Hence, as $k=s$, we have
	$$\frac{1}{4} (e^TAe- \hat{x}^TA\hat{x}) \geq k,$$ which is equivalent to $\frac{1}{4} n(n+1)^3-\frac{1}{4}\hat{x}^TA\hat{x} \geq \frac{1}{4} n(n+1)^3-\frac{1}{4}e^TAe+k=\mu+1$. As $\hat{x}^T\hat{x}=n$, it follows that 
	$$\frac{1}{4}\hat{x}^T((n+1)^3I_n-A)\hat{x} \geq \mu +1.$$
	Using Lemma \ref{lem:bounds}, we get $$\hat{x}^TC^{-1}\hat{x} \geq \mu +\frac34 >\mu +\frac14.$$
By the Schur complement, we conclude that $L(\hat{x}) \not\succeq 0.$
	
	Note that the final claim of the theorem is already implied by the arguments given thus far. Indeed, given $C$ and $\mu$ as defined in (\ref{def:gam.C.mu}), one can construct a graph whose adjacency matrix is $A$ and take $s=k$. Either this graph has no cut of size greater or equal to $s$, in which case we have shown that $x^TC^{-1}x \leq \mu+\frac14$ for all $x \in \mathbb{R}^n$ with $||x||_{\infty} \leq 1$, or it has a cut of size greater or equal to $s$, in which case there exists $\hat{x}$ such that $||\hat{x}||_{\infty} \leq 1$ and $\hat{x}^T C^{-1} \hat{x} \geq \mu +\frac{3}{4}.$
\end{proof}

We briefly provide an immediate corollary of this theorem, which we believe can be of independent interest as it relates to problems in robust control~\cite{ben2003extended,blondel1997np,ben2002tractable}. This statement was first proven in \cite{nemirovskii1993several}, except that the result there shows NP-hardness in the weak sense. 
%


\begin{corollary}[Strong NP-hardness of testing interval positive semidefiniteness]\label{cor:psd.interval}
Given rational numbers $\hat{m}_{ij}, \bar{m}_{ij}, i=1,\ldots,N, j=1,\ldots,N,$ with $\hat{m}_{ij} \leq \bar{m}_{ij}$, $\hat{m}_{ij}=\hat{m}_{ji}$, and $\bar{m}_{ij}=\bar{m}_{ji}$ for $i=1,\ldots,N,$ $j=1,\ldots,N$, it is strongly NP-hard to test whether all $N \times N$ symmetric matrices $M$ with entries $m_{ij} \in [\hat{m}_{ij}, \bar{m}_{ij}]$ for $i=1,\ldots,N, j=1,\ldots,N$, are positive semidefinite. 
%
%
\end{corollary}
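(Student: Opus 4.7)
The plan is to give a direct reduction from the problem established as strongly NP-hard in Theorem~\ref{th:strong.np.hard}. The key observation is that the matrix $L(x)$ in (\ref{def:Lx}) has a very specific structure: its top-left $n \times n$ block equals the fixed matrix $C$, its bottom-right corner equals the fixed scalar $\mu + \tfrac{1}{4}$, and its last row/column entries (excluding the corner) are precisely the components of $x^T$ and $x$, which range independently over $[-1,1]$ when $\|x\|_{\infty} \leq 1$. Hence, varying $L(x)$ over all $x$ with $\|x\|_{\infty} \leq 1$ is equivalent to varying a symmetric $(n+1) \times (n+1)$ matrix over an interval matrix family in which most entries are fixed.

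Concretely, given an instance of the problem in Theorem~\ref{th:strong.np.hard} specified by $(A,k)$ and thus by $(C,\mu)$ as in (\ref{def:gam.C.mu}), I would construct an instance of interval positive semidefiniteness testing with $N = n+1$ by setting $\hat{m}_{ij} = \bar{m}_{ij} = C_{ij}$ for $1 \leq i,j \leq n$, $\hat{m}_{N,N} = \bar{m}_{N,N} = \mu + \tfrac{1}{4}$, and $\hat{m}_{i,N} = \hat{m}_{N,i} = -1$, $\bar{m}_{i,N} = \bar{m}_{N,i} = 1$ for $1 \leq i \leq n$. Under this choice, the interval matrix family coincides exactly with $\{L(x) : x \in [-1,1]^n\}$, so every matrix in the family is positive semidefinite if and only if $L(x) \succeq 0$ for every $x$ with $\|x\|_{\infty} \leq 1$. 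This reduction is clearly polynomial in length, so NP-hardness transfers immediately.

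The only nontrivial point, which is the main obstacle to strong (as opposed to weak) NP-hardness, is verifying that the rational numerical data of the constructed instance remains polynomially bounded in magnitude. Inspecting (\ref{def:gam.C.mu}), the entries of $C$ are rationals of the form $\tfrac{4}{(n+1)^3}\bigl(1 + \tfrac{A_{ii}}{(n+1)^3}\bigr)$ on the diagonal and $\tfrac{4 A_{ij}}{(n+1)^6}$ off the diagonal; since $A_{ij} \in \{0,1\}$, their numerators and denominators have magnitude at most $(n+1)^6$. The scalar $\mu + \tfrac{1}{4}$ is a rational whose numerator is polynomially bounded in $n$ (since $k \leq n^2$ and $e^T A e \leq n^2$) with denominator at most $4$. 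All remaining interval endpoints are $\pm 1$. Hence $\mathrm{Max}(I)$ for the constructed instance is polynomial in $n$, while $\mathrm{Length}(I)$ is at least on the order of $n^2$, so the reduction preserves the strong NP-hardness of Theorem~\ref{th:strong.np.hard} and yields the corollary.
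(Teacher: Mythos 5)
Your proposal is correct and follows essentially the same route as the paper: the identical choice of interval endpoints (fixing the $C$-block and the corner entry, letting the off-diagonal border entries range over $[-1,1]$), the identification of the resulting symmetric interval family with $\{L(x): \|x\|_\infty \le 1\}$, and the same $\mathrm{Max}(I)$ versus $\mathrm{Length}(I)$ accounting to preserve strong NP-hardness. No gaps.
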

\begin{proof}
The claim follows straightforwardly by a reduction from the problem in the statement of Theorem \ref{th:strong.np.hard}. Let $A$ be a symmetric $n \times n$ matrix with entries in $\{0,1\}$ and $k \leq n^2$ be a positive integer. Construct the symmetric matrix $C$ and the scalar $\mu$ as in (\ref{def:gam.C.mu}). Let $\hat{m}_{ij}=\bar{m}_{ij}=C_{ij}$ for $i=1,\ldots,n, j=1,\ldots,n,$ $\hat{m}_{i(n+1)}=\hat{m}_{(n+1)i}=-1$ and $\bar{m}_{i(n+1)}=\bar{m}_{(n+1)i}=1$ for $i=1,\ldots,n$, and $\hat{m}_{(n+1)(n+1)}=\bar{m}_{(n+1)(n+1)}=\mu+\frac{1}{4}$. It follows that all $(n+1) \times (n+1)$ symmetric matrices $M$ with entries $m_{ij} \in [\hat{m}_{ij}, \bar{m}_{ij}]$ for $i=1,\ldots,n+1, j=1,\ldots,n+1$, are positive semidefinite if and only if the matrix $L(x)$ defined in (\ref{def:Lx}) is positive semidefinite for all $x \in \mathbb{R}^n$ with $||x||_{\infty} \leq 1.$ 

Note that the reduction we have given leads to a strong NP-hardness result. Indeed, all instances $I$ of the interval positive semidefiniteness problem that arise from this construction have $\mbox{Length}(I)$ of order $n^2$ and $\mbox{Max}(I)$ of order $n^6$, and hence $\mbox{Max}(I)$ is upper bounded by a polynomial in $\mbox{Length}(I)$.
%
%
%
%
\end{proof}

We now move on to the proof of the main theorem of this paper.

\begin{proof}[Proof of Theorem \ref{th:np.hardness.convex}]
	
	We prove the claim via a reduction from the NP-hard problem stated in Theorem \ref{th:strong.np.hard}. Let $A$ be an $n \times n$ symmetric matrix with entries in $\{0,1\}$ and $k \leq n^2$ be a positive integer. Let $C$ and $\mu$ be as in (\ref{def:gam.C.mu}) and, \gh{for $x\in \mathbb{R}^n$,} define $L(x)$ to be the following $(n+1) \times (n+1)$ symmetric matrix  $$L(x) \mathrel{\mathop{:}}=\begin{bmatrix} C & x\\ x^T & \mu +\frac14 \end{bmatrix}.$$ Let $y\mathrel{\mathop{:}}=(y_1,\ldots,y_{n+1})^T$ be a new vector of variables and  $H(y)$ be the following $n \times (n+1)$ matrix of mixed partial derivatives of the cubic polynomial $y^TL(x)y$: $$H_{ij}(y)=\frac{\partial^2 y^TL(x)y}{\partial x_i \partial y_j},~\text{for } i=1,\ldots,n, \text{ and }j=1,\ldots,n+1.$$ 
	Using the structure of $L(x)$, after some basic algebra, we obtain:
	\begin{align}\label{eq:Hy}
	H(y)=\begin{bmatrix} 2y_{n+1} & 0 & \ldots & 0 & 2y_1 \\ 0 & \ddots & 0 & 0  & 2y_2\\ 0 & 0 & \ddots & 0 & \vdots \\  0 & \ldots & 0 & 2y_{n+1} & 2y_n \end{bmatrix}.
	\end{align}
%
%
%
%
%
%
%
Let
	\begin{equation}\label{def:n.alpha.gamma}
	\begin{aligned} 
	&\alpha\mathrel{\mathop{:}}=16\cdot n(1+ 16n^7),\\
	\gh{\text{and }}&\eta\mathrel{\mathop{:}}=\frac{1}{4}\cdot \frac{1}{1+16n^7}.
	\end{aligned} 
	\end{equation} Consider the following cubic polynomial in $2n+1$ variables $x_1,\ldots,x_n, y_1,\ldots,y_{n+1}$ $$f(x,y)=\frac{1}{2}y^TL(x)y+\frac{\alpha}{2} x^Tx+\frac{\eta}{2} y^Ty,$$ and the box $B=[-1,1]^{2n+1}.$
	We claim that $f(x,y)$ is convex over $B$ if and only if $L(x)\succeq 0$ for all $x \in \mathbb{R}^n$ with $||x||_{\infty} \leq 1.$ This would imply the desired NP-hardness result as the construction of $f$ and $B$ from $A$ and $k$ can be carried out in polynomial time. Moreover, NP-hardness here is in the strong sense as all instances $I$ of the problem in the statement of Theorem \ref{th:np.hardness.convex} that arise from this construction have $\mbox{Length}(I)$ of order $n^3$ (the number of coefficients of $f$) and $\mbox{Max}(I)$ of order $n^8$ (which is the magnitude of $\alpha$, which dominates all other coefficients). Hence, $\mbox{Max}(I)$ is upper bounded by a polynomial in \gh{$\mbox{Length}(I).$} 	
%
%

To prove the claim, we begin by observing that the Hessian of $f$ has the following structure:
$$\nabla^2 f(x,y)=\begin{bmatrix}\alpha I_n  & \frac12 H(y) \\ \frac12 H(y)^T & L(x)+\eta I_{n+1}\end{bmatrix}.$$ 
%
%

%
%
	
Suppose first that $L(x)$ is not positive semidefinite for all $x\in [-1,1]^n$. Then, from the second claim in Theorem \ref{th:strong.np.hard}, there exists $\bar{x} \in [-1,1]^n$ such that $\bar{x}^TC^{-1}\bar{x} \geq \mu+\frac34.$ Let $\bar{y}=0_{(n+1)\times 1}$, i.e., the vector of zeros of length $n+1$, and observe that $(\bar{x},\bar{y}) \in [-1,1]^{2n+1}=B.$ Let $$z=\frac{1}{||(-C^{-1}\bar{x},1)^T||_2}\begin{pmatrix} 0_{n \times 1}\\ -C^{-1}\bar{x}\\ 1
	\end{pmatrix}.$$ 
	We will show that $z^T\nabla^2 f(\bar{x},\bar{y})z < 0.$ Indeed, we have 
	\begin{equation}\label{eq:expand.hessian}
	\begin{aligned}
	z^T\nabla^2f(\bar{x},\bar{y})z&= \frac{1}{1+||C^{-1}\bar{x}||_2^2} \cdot\begin{bmatrix} -C^{-1}\bar{x}\\ 1 \end{bmatrix}^T (L(\bar{x})+\eta I_{n+1})\begin{bmatrix} -C^{-1}\bar{x}\\ 1 \end{bmatrix} \\
	&= \frac{1}{1+||C^{-1}\bar{x}||_2^2} \cdot\begin{bmatrix} -C^{-1}\bar{x}\\ 1 \end{bmatrix}^T \begin{bmatrix}
	C+\eta I_n & \bar{x} \\ \bar{x}^T & \mu+\frac14 +\eta
	\end{bmatrix}\begin{bmatrix} -C^{-1}\bar{x}\\ 1 \end{bmatrix}\\
	&= \frac{1}{1+||C^{-1}\bar{x}||_2^2} \left( \mu +\frac14 -\bar{x}^TC^{-1}\bar{x} +\eta (1+||C^{-1}\bar{x}||_2^2) \right)\\
	&=\frac{\mu+\frac14-\bar{x}^TC^{-1}\bar{x}}{1+||C^{-1}\bar{x}||_2^2}+\eta.
	\end{aligned}
	\end{equation}
To show that this expression is negative, we upper bound the numerator and the denominator of its first summand. Using (\ref{eq:C.identity}), we can write 
	\begin{align*}
	||C^{-1}\bar{x}||_2 &\leq \frac{(n+1)^3}{4} \sum_{k=0}^{\infty} \frac{||A^k\bar{x}||_2}{(n+1)^{3k}}\\
	&\leq \frac{(n+1)^3}{4} \sum_{k=0}^{\infty} \frac{||A||_2^k\cdot || \bar{x}||_2}{(n+1)^{3k}}\\
	&\leq \frac{\sqrt{n}(n+1)^3}{4} \sum_{k=0}^{\infty} \frac{1}{(n+1)^{2k}}\\
	&\leq \frac{(n+1)^4}{4\sqrt{n}}
	\end{align*}
	where, much as was done in the proof of Lemma \ref{lem:bounds}, the first inequality uses the triangle inequality, the second the definition and the submultiplicativity property of the matrix 2-norm, the third the facts that $||\bar{x}||_2 \leq \sqrt{n}||\bar{x}||_\infty \leq \sqrt{n} $ and $||A||_2 \leq n+1$, and the last the formula for the sum of a geometric series and the fact that $\frac{n+1}{n+2} \leq 1$. 
		Hence: 
		\begin{align*}
		1+||C^{-1}\bar{x}||_2^2&{\gh \leq}1+ \frac{(n+1)^8}{16n} \leq 1+\frac{(2n)^8}{16n} {\gh =}1+16n^7.
		\end{align*}
	Combining this with (\ref{eq:expand.hessian}) and the fact that
		\begin{align*}
 \mu+\frac14 -\bar{x}^T C^{-1}\bar{x} \leq -\frac12,
		\end{align*}
	we get $$z^T\nabla^2 f(\bar{x},\bar{y})z\leq -\frac{1}{2(1+16n^7)}+\eta.$$
	Replacing $\eta$ by its expression in (\ref{def:n.alpha.gamma}), we see that $z^T \nabla^2 f(\bar{x},\bar{y})z \leq -\frac{1}{4} \cdot \frac{1}{1+16n^7}<0.$ Hence $\nabla^2 f(x,y)$ is not positive semidefinite over $B$, and therefore $f(x,y)$ is not convex over $B$.
	
	Suppose now that $L(x) \succeq 0$ for all $x \in [-1,1]^n.$ We prove that $f(x,y)$ is convex over $B$ by showing that $\nabla^2 f(x,y) \succeq 0$ for all $(x,y) \in B.$ As $\alpha>0$, this is equivalent to showing, using the Schur complement \gh{(see Remark \ref{rem:Schur})}, that $$L(x)+\eta I_{n+1} -\frac{1}{4\alpha} H(y)^TH(y) \succeq 0, \text{ for all } (x,y) \in B.$$
	We prove that $\eta I_{n+1}-\frac{1}{\alpha} H(y)^TH(y) \succeq 0$ for all $y \in [-1,1]^{n+1}$, and as $L(x) \succeq 0$ for any $x \in [-1,1]^n$, the claim would follow. From (\ref{eq:Hy}), we deduce the following expression for the symmetric $(n+1) \times (n+1)$ matrix $H(y)^TH(y)$:
	$$H(y)^TH(y)=4\begin{bmatrix} y_{n+1}^2 I_n & y_{n+1} \cdot y \\ y_{n+1} \cdot y^T & \sum_{i=1}^n y_i^2 \end{bmatrix}.$$
	By Gershgorin's circle theorem \cite{gersh}, for \gh{any} $y \in [-1,1]^{n+1}$, we can upper bound the largest eigenvalue of this matrix as follows:	\gh{
	\begin{align*}
	\lambda_{\max}(H(y)^TH(y)) &\leq 4 \max \left\{\max_{i=1,\ldots,n} \{y_{n+1}^2+|y_iy_{n+1}|\}, \sum_{i=1}^n y_i^2+\sum_{i=1}^n |y_{n+1}y_i|\right\}\\
	&\leq 4 \max \{2,2n\}=8n.
	\end{align*}}
Using this bound and replacing $\alpha$ and $\eta$ by their expressions in (\ref{def:n.alpha.gamma}), we conclude that 
	$$\eta I_{n+1}-\frac{1}{4\alpha} H(y)^TH(y) \succeq \left(\eta-\frac{2n}{\alpha}\right)I_{n+1} = \frac{1}{8(1+16n^7)}I_{n+1} \succeq 0.$$	
\end{proof}

\gh{
\begin{remark}
	As observed by a referee, a careful analysis of the proof of Theorem~\ref{th:np.hardness.convex} shows that it is also strongly NP-hard to check convexity of a cubic polynomial over an open hyper-rectangle (i.e., a set of the type $\{x \in \mathbb{R}^n ~|~ l_i <x_i<u_i,i=1,\ldots,n\}$).
\end{remark}
}

We end with the observation that Theorem \ref{th:np.hardness.convex} immediately classifies the complexity of detecting convexity of a polynomial of degree $d$ over a box, for any integer $d \geq 1$.

\begin{proposition}\label{prop:degree}
	The problem of testing whether a polynomial $f$ of degree $d$ is convex over a box $B$ can be solved in polynomial time for $d=1$ and $d=2$ and is strongly NP-hard for any fixed integer $d\geq 3.$
	\end{proposition}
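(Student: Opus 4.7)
The plan is to handle $d = 1$, $d = 2$, and $d \geq 3$ separately. For $d = 1$ the answer is trivially ``yes'' since affine functions are convex on every convex set. For $d = 2$, the Hessian $\nabla^2 f$ is a constant rational symmetric matrix $Q$, and when $B$ has nonempty interior, $f$ is convex over $B$ if and only if $Q \succeq 0$; this can be decided in polynomial time by, e.g., an $LDL^T$ factorization. If $B$ has empty interior---which happens exactly when some $l_i = u_i$---I would first substitute these fixed coordinates into $f$ to obtain a quadratic in fewer variables on a full-dimensional box in a lower-dimensional space, and then apply the previous test.

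For the hardness direction, the case $d = 3$ is precisely Theorem~\ref{th:np.hardness.convex}. For any fixed integer $d \geq 4$ the plan is to reduce from $d = 3$ via a simple lifting. Given a cubic $f$ and a box $B \subseteq \mathbb{R}^n$ from an instance of the $d = 3$ problem (which, as produced in the proof of Theorem~\ref{th:np.hardness.convex}, has nonempty interior), I form the degree-$d$ polynomial and box
$$g(x,t) \mathrel{\mathop{:}}= f(x) + t^d, \qquad B' \mathrel{\mathop{:}}= B \times [0,1].$$
A direct calculation shows that $\nabla^2 g(x,t)$ is block-diagonal with blocks $\nabla^2 f(x)$ and $d(d-1)\, t^{d-2}$. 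Since $d \geq 3$ and $t \in [0,1]$, the scalar block is nonnegative on $B'$, so $\nabla^2 g \succeq 0$ on $B'$ if and only if $\nabla^2 f \succeq 0$ on $B$; as $B$ and $B'$ both have nonempty interior, this is equivalent to convexity of $g$ over $B'$ vs. convexity of $f$ over $B$.

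To ensure that strong NP-hardness (and not just ordinary NP-hardness) is preserved, I would note that the lifting appends only one new variable, one monomial with coefficient $1$, and the rationals $0$ and $1$ to the input; hence Length grows by $O(1)$ while Max is unchanged. Since the instances of the $d = 3$ problem produced by Theorem~\ref{th:np.hardness.convex} already satisfy $\mathrm{Max}(I) \leq p(\mathrm{Length}(I))$ for some polynomial $p$, the lifted degree-$d$ instances inherit this property. I do not expect any genuine conceptual obstacle; the only care needed is to avoid a non-convex power of $t$ in the lifting (for instance, using $t^d$ on $t \in [-1,1]$ for odd $d$ would fail), and not to overlook the degenerate-box subcase when $d = 2$.
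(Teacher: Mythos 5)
Your proposal is correct and follows essentially the same route as the paper's proof: triviality for $d=1$, a constant-Hessian positive semidefiniteness test for $d=2$, Theorem~\ref{th:np.hardness.convex} for $d=3$, and for $d\geq 4$ the identical lifting $g(x)+x_{n+1}^d$ over $\tilde{B}\times[0,1]$ with the same block-diagonal Hessian argument. Your extra care about degenerate (lower-dimensional) boxes in the $d=2$ case is a small refinement the paper does not spell out, but it does not change the argument.
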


\begin{proof}
When $d=1$, $f$ is always globally convex. Hence the problem of testing whether $f$ is convex over a box $B$ can trivially be solved in polynomial time (by answering `yes' to all instances). When $d=2$, the Hessian of $f$ is constant, and so testing convexity of $f$ over $B$ is equivalent to testing whether its (constant) Hessian matrix $\nabla^2 f$ is positive semidefinite. This can be done in polynomial time, e.g. by performing Gaussian pivot steps along the main diagonal of $\nabla^2 f$~\cite{nonnegativity_NP_hard}, or by computing the characteristic polynomial of $\nabla^2 f$ exactly and then checking that the signs of its coefficients alternate~\cite[p. 403]{HJ_Matrix_Analysis_Book}.


For $d=3$, Theorem \ref{th:np.hardness.convex} establishes the claim. For $d\geq 4$, we give a reduction from the problem of testing convexity of a cubic polynomial $g(x)\mathrel{\mathop{:}}=g(x_1,\ldots,x_n)$ over a full-dimensional\footnote{Note that the proof of Theorem \ref{th:np.hardness.convex} established strong NP-hardness of this problem as the box that arose in the proof was the unit hypercube, which is full-dimensional.} box $\tilde{B} \subseteq \mathbb{R}^n$. Given such $g, \tilde{B},$ let
$$f(x_1,\ldots,x_n,x_{n+1})\mathrel{\mathop{:}}=g(x_1,\ldots,x_n)+x_{n+1}^d,$$
and $B\mathrel{\mathop{:}}=\tilde{B}\times [0,1]$. We have
$$\nabla^2 f(x,x_{n+1})=\begin{bmatrix} \nabla^2 g(x) & 0\\ 0 & d(d-1)\cdot x_{n+1}^{d-2}\end{bmatrix},$$
and hence $\nabla^2 f(x,x_{n+1})\succeq 0$ over $B$ if and only if $\nabla^2 g(x)\succeq 0$ over $\tilde{B}$ (as $d(d-1)x_{n+1}^{d-2}$ is nonnegative over $[0,1]$). It follows that testing convexity of $f$ over $B$ is strongly NP-hard. 
%
%
%
%
\end{proof}

	
To conclude, we would like to emphasize that our result should not discourage researchers from seeking algorithms for testing convexity of polynomials over a box. \gh{While we have shown that} a search for a pseudo-polynomial time algorithm that works on all cubic polynomials is hopeless unless P=NP, \gh{it remains valuable to characterize interesting classes of polynomials on which convexity detection can be done efficiently.}


%
%
%
%
%
%
%
%
\section*{Acknowledgements}
We are grateful to two anonymous referees whose detailed and constructive feedback has improved this paper significantly.

\small{
\bibliographystyle{spmpsci}
\bibliography{thesis}}




\end{document}